\theoremstyle{plain}
\numberwithin{equation}{section}
\newtheorem{theorem}{Theorem}
\newtheorem{corollary}{Corollary}
\newtheorem{lemma}{Lemma}
\numberwithin{lemma}{section}
\numberwithin{theorem}{section}
\numberwithin{corollary}{section}
\numberwithin{proposition}{section}
\theoremstyle{definition}
\newtheorem{definition}{Definition}
\numberwithin{definition}{section}
\theoremstyle{remark}
\numberwithin{example}{section}
\newcommand{\nc}{\newcommand}
\nc{\mc}{\mathcal}
\nc{\OOn}{\OOperatorname}
\nc{\Cl}{Cl}
\nc{\drva}{\widehat{\OOmega}}
\nc{\spec}{\OOn{Spec}}
\nc{\AutO}{\OOn{Aut} \mc{O}}
\nc{\vac}{|0\rangle}
\nc{\Z}{\mathbb{Z}}
\nc{\zf}[1]{z^{\frac{1}{#1}}}
\nc{\wf}[1]{w^{\frac{1}{#1}}}
\nc{\Mt}{M^{\sigma}}
\nc{\gr}{\OOn{gr}}
\nc{\T}{\mathbb{T}}
\nc{\CT}{\mathbb{C}\{ \mathbb{T} \}}
\nc{\g}{\mathfrak{g}}
\nc{\n}{\mathfrak{n}}
\nc{\I}{\mathcal{I}}
\nc{\M}{M}
\nc{\OO}{\mathcal{O}}
\nc{\al}{\alpha}
\nc{\OOl}{\OOverline}
\begin{document}

\title{On the structure and representations of the insertion-elimination Lie algebra}

\author{Matthew Szczesny} \thanks{}
\address{Department of Mathematics  
         Boston University, Boston, MA 02215}
\email{szczesny@math.bu.edu}

\date{October 2007}

\begin{abstract}

We examine the structure of the insertion-elimination Lie algebra on rooted trees introduced in 
\cite{CK}. It possesses a triangular structure $\g = \n_+ \oplus \mathbb{C}.d \oplus \n_-$, like the Heisenberg, Virasoro, and affine algebras. We show in particular that it is simple, which in turn implies that it has no finite-dimensional representations. We consider a category of lowest-weight representations, and show that irreducible representations are uniquely determined by a "lowest weight" $\lambda \in \mathbb{C}$. We show that each irreducible representation is a quotient of a Verma-type object, which is generically irreducible.

\end{abstract}

\maketitle 

\section{Introduction}

The insertion-elimination Lie algebra $\g$ was introduced in \cite{CK} as a means of encoding the combinatorics of inserting and collapsing subgraphs of Feynman graphs, and the ways the two operations interact. A more abstract and universal description of these two operations is given in terms of rooted trees, which encode the hierarchy of subdivergences within a given Feynman graph, and it is this description that we adopt in this paper. More precisely, $\g$ is generated by two sets of operators $\{ D^+_t \}$, and $\{ D^-_t \}$, where $t$ runs over the set of all rooted trees, together with a grading operator $d$. In \cite{CK} $\g$ was defined in terms of its action on a natural representation $\CT$, where the latter denotes the vector space spanned by rooted trees. For $s \in \CT$, $D^+_t.s$ is a linear combination of the trees obtained by attaching $t$ to $s$ in all possible ways, whereas $D^-_t.s$ is a linear combination of all the trees obtained by pruning the tree $t$ from branches of $s$. $\n_+= \{ D^+_t \}$ and $\n_-=\{ D^-_t\}$ form two isomorphic nilpotent Lie subalgebras, and $\g$ has a triangular structure 
\[
\g = \n_+ \oplus \mathbb{C}.d \oplus \n_-
\]
as well as a natural $\mathbb{Z}$--grading by the number of vertices of the tree $t$. 
The Hopf algebra $U(\n_{\pm})$ is dual to Kreimer's Hopf algebra of rooted trees \cite{K}.

This note aims to estblish a few basic facts regarding the structure and representation theory of $\g$. We begin by showing that $\g$ is simple, which together with its infinite-dimensionality implies that it has no non-trivial finite-dimensional representations, and that any non-trivial representation is necessarily faithful. We then proceed to develop a highest-weight theory for $\g$ along the lines of \cite{K1, K2}. In particular, we show that every irreducible highest-weight representation of $\g$ is a quotient of a Verma-like module, and that these are generically irreducible. 

One can define a larger, "two-parameter" version of the insertion-elimination Lie algebra $\widetilde{\g}$, where operators are labelled by pairs of trees $D_{t_1, t_2}$ (roughly speaking, in acting on $\CT$, this operator replaces occurrences of $t_1$ by $t_2$). In the special case of ladder trees, $\widetilde{\g}$ was studied in \cite{M, KM1, KM2}. The finite-dimensional representations of the nilpotent subalgebras $\n_{\pm}$ as well as many other aspects of the Hopf algebra $U(\n_{\pm})$ were studied in \cite{F}. 

\bigskip

\noindent {\bf Acknowledgements:} The author would like to thank Dirk Kreimer for many illuminating conversations and explanations of renormalization as well as related topics. This work was supported by NSF grant DMS-0401619.

\section{The insertion-elimination Lie algebra on rooted trees} \label{basicfacts}

In this section, we review the construction of the insertion-elimination Lie algebra introduced in \cite{CK}, with some of the notational conventions introduced in \cite{M}.

Let $\T$ denote the set of rooted trees. An element $t \in \T$ is a tree (finite, one-dimensional contractible simplicial complex), with a distinguished vertex $r(t)$, called the root of $t$. Let $V(t)$ and $E(t)$ denote the set of vertices and edges of $t$,  and let 
$$
| t | = \#  V(t) 
$$
Let  $\CT$ denote the vector space spanned by rooted trees. It is naturally graded, 
\begin{equation} \label{CT}
\CT = \bigoplus_{n \in \mathbb{Z}_{\geq 0}} \CT_n
\end{equation}
where $\CT_n = \operatorname{span}\{ t \in \mathbb{T} \vert |t| = n \}$. $\CT_0$ is spanned by the empty tree, which we denote  by $\bf{1}$. We have \psset{levelsep=0.6cm}
\begin{center}
$\CT_0 = <1> $ \hskip 1cm $\CT_1=<\bullet> $ \hskip 1cm $\CT_2=$ $<$ \pstree{\Tr{$\bullet$}}{\Tr{$\bullet$}} $>$
\end{center}
\begin{center}
$\CT_3 = $ $<$ \pstree{\Tr{$\bullet$}}{\pstree{\Tr{$\bullet$}}{\Tr{$\bullet$}}} , \pstree{\Tr{$\bullet$}}{\Tr{$\bullet$}\Tr{$\bullet$}}$>$
\end{center}
where $<,>$ denotes span, and the root is the vertex at the top. 
If $e \in E(t)$, by a \emph{cut along} $e$ we mean the operation of cutting $e$ from $t$. This divides $t$ into two components - $R_c(t)$ containing the root, and $P_e(t)$, the remaining one. $R_e(t)$ and $P_e (t)$ are naturally rooted trees, with $r(R_c (t)) = r(t)$ and $r(P_e (t)) = $ (endpoint of e). Note that
$V(t) = V(R_e(t)) \cup V(P_e(t))$.

\bigskip

Let $\g$ denote the Lie algebra with generators $D^+_t, D^-_t, d$, $t \in \T$, and relations
\begin{equation}
\label{rel1}[D^+_{t_1}, D^{+}_{t_2}] = \sum_{v \in V(t_2)} D^{+}_{t_2 \cup_v t_1} - \sum_{v \in V(t_1)} D^{+}_{t_1 \cup_v t_2} 
\end{equation}
\begin{equation}
\label{rel2} [D^-_{t_1}, D^{-}_{t_2}] =   \sum_{v \in V(t_1)} D^{-}_{t_1 \cup_v t_2} 
-  \sum_{v \in V(t_2)} D^{-}_{t_2 \cup_v t_1} 
\end{equation}
\begin{equation}
\label{rel3} [D^-_{t_1}, D^{+}_{t_2}] =  \sum_{t \in \T} \alpha(t_1, t_2;t) D^{+}_t + \sum_{t \in T} \beta(t_1,t_2;t) D^{-}_t
\end{equation}
\begin{equation}
\label{rel4} [D^{-}_t, D^{+}_t] = d 
\end{equation}
\begin{equation}
\label{rel5} [d, D^{-}_t] = - |t| D^{-}_t 
\end{equation}
\begin{equation}
\label{rel6}  [d, D^{+}_t] = |t| D^{+}_t
\end{equation}

where for $s, t \in \T$, and $v \in V(s)$ $s \cup_v t$ denotes the rooted tree obtained by joining the root of $t$ to $s$ at the vertex $v$ via a single edge, and 

\begin{itemize}
\item $\alpha(t_t, t_2; t) = \# \{ e \in E(t_2) | R_e(t_2) = t, \, \, P_e(t_2) = t_1 \}$
\item $\beta(t_1, t_2; t) = \# \{ e \in E(t_1) | R_e(t_1) = t, \, \, P_e(t_1) = t_2 \}$
\end{itemize}

\bigskip

Thus, for example
\psset{levelsep=0.3cm, treesep=0.3cm}
\begin{align*}
 \psset{levelsep=0.3cm, treesep=0.3cm}
[D^+_{\bullet}, D^+_{ \pstree{\Tr{\bullet}}{\Tr{\bullet}\Tr{\bullet}} }] &= D^+_{\pstree{\Tr{\bullet}}{\Tr{\bullet} 
\Tr{\bullet} \Tr{\bullet }}} + 2 D^{+}_{ \pstree{\Tr{\bullet}}{\pstree{\Tr{\bullet}}{\Tr{\bullet}}\Tr{\bullet}}       }  - D^+_{\pstree{\Tr{\bullet}}{\pstree{\Tr{\bullet}}{ \Tr{\bullet} \Tr{\bullet}} }} \\
 \psset{levelsep=0.3cm, treesep=0.3cm}
[D^-_{\bullet}, D^-_{ \pstree{\Tr{\bullet}}{\Tr{\bullet}\Tr{\bullet}} }] &= -D^-_{\pstree{\Tr{\bullet}}{\Tr{\bullet} 
\Tr{\bullet} \Tr{\bullet }}} - 2 D^{-}_{ \pstree{\Tr{\bullet}}{\pstree{\Tr{\bullet}}{\Tr{\bullet}}\Tr{\bullet}}       }  + D^-_{\pstree{\Tr{\bullet}}{\pstree{\Tr{\bullet}}{ \Tr{\bullet} \Tr{\bullet}} }} \\
 \psset{levelsep=0.3cm, treesep=0.3cm}
[D^-_{\bullet}, D^+_{ \pstree{\Tr{\bullet}}{\Tr{\bullet}\Tr{\bullet}} }] &= 2 D^{+}_{ \pstree{\Tr{\bullet}}{\Tr{\bullet}}}
\end{align*}

$\g$ acts naturally on $\CT$ as follows. If $s \in \T$, viewed as an element of $\CT$, and $t \in \T$, then
\[
D^{+}_t (s) = \sum_{v \in V(s)} s \cup_v t
\]
\[
D^{-}_t (s) = \sum_{e \in E(s), P_e(s) = t} R_e (s)
\]
\[
d (s) = |s| s
\]

\section{Structure of $\g$}

Let $\n_+$ and $ \n_{-}$ be the Lie subalgebras s of $\g$ generated by $D^+_{t}$ and $D^-_t$ , $t \in \T$. We have a triangular decomposition
\begin{equation} \label{triangular}
\g = \n_+ \oplus \mathbb{C}. d \oplus \n_{-}
\end{equation}
The relations \ref{rel4}, \ref{rel5}, and \ref{rel6} imply that for every $t \in \T$
\[
\g^t =  <D^{+}_t, D^{-}_t, d>
\] 
forms a Lie subalgebra isomorphic to $\mathfrak{sl}_2$. We have that $\g_t \cap \g_s = \mathbb{C}. d$ if $s \ne t$.
Assigning degree $|t|$ to $D^{+}_t$, $-|t|$ to $D^{-}_t$, and $0$ to $d$ equips $\g$ with a $\mathbb{Z}$--grading. 
\[
\g = \bigoplus_{n \in \mathbb{Z}} \g_n
\]
$\g$ possesses an involution $\iota$, with
\[
\iota(D^+_t)=D^-_t \hspace{1in} \iota(D^-_t)=D^{+}_t \hspace{1in} \iota(d) = - d
\]
Thus $\iota$ is a gradation-reversing Lie algebra automorphism exchanging $\n_{+}$ and $\n_{-}$. 

\begin{theorem} \label{gissimple}
$\g$ is a simple Lie algebra
\end{theorem}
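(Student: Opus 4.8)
The plan is to exploit the $\mathbb{Z}$--grading $\g = \bigoplus_n \g_n$ together with two structural facts: that $\operatorname{ad} d$ acts on $\g_n$ as multiplication by the scalar $n$ (relations (\ref{rel5}) and (\ref{rel6})), and that $\g_0 = \mathbb{C}.d$ by the triangular decomposition (\ref{triangular}). Throughout I take for granted that the $D^+_t$, $D^-_t$ and $d$ are linearly independent in $\g$; this is what makes $\g_0 = \mathbb{C}.d$ and the coefficient extraction below meaningful, and it can be read off from the faithfulness of the natural action on $\CT$. Let $I \neq 0$ be an ideal; the goal is to show $I = \g$.

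First I would show that $I$ is a graded ideal. Given $x = \sum_n x_n \in I$ with $x_n \in \g_n$ and only finitely many terms nonzero, the elements $(\operatorname{ad} d)^k x = \sum_n n^k x_n$ lie in $I$ for all $k \geq 0$; a Vandermonde combination over the finitely many degrees occurring then isolates each $x_n$, so $x_n \in I$ and $I = \bigoplus_n (I \cap \g_n)$.

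The heart of the argument is to produce $d \in I$ from any nonzero homogeneous element of $I$. Pick $0 \neq x \in I \cap \g_n$. If $n = 0$ then $x \in \g_0 = \mathbb{C}.d$ and we are done. Suppose $n > 0$ and write $x = \sum_{|t| = n} c_t D^+_t$ with some $c_{t_0} \neq 0$. I would bracket with $D^-_{t_0}$: since every term has $|t| = |t_0| = n$, the element $[D^-_{t_0}, x]$ has degree $0$ and hence lies in $\g_0 = \mathbb{C}.d$. The key sub-lemma is that $[D^-_{t_1}, D^+_{t_2}] = 0$ whenever $|t_1| = |t_2|$ and $t_1 \neq t_2$: by (\ref{rel3}) a nonzero $\alpha(t_1, t_2; t)$ or $\beta(t_1, t_2; t)$ would force $P_e(t_2) = t_1$ (resp.\ $P_e(t_1) = t_2$) for some cut edge $e$, but the pruned component $P_e(\cdot)$ is a proper subtree — the root component $R_e(\cdot)$ is nonempty — and so has strictly fewer vertices, contradicting $|t_1| = |t_2|$. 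Combined with $[D^-_t, D^+_t] = d$ from (\ref{rel4}), this gives $[D^-_{t_0}, x] = c_{t_0}\, d$, whence $d \in I$. The case $n < 0$ is identical after applying the involution $\iota$ (or directly, by bracketing with a suitable $D^+_{t_0}$).

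Finally, once $d \in I$, relations (\ref{rel5}) and (\ref{rel6}) give $[d, D^\pm_t] = \pm |t|\, D^\pm_t \in I$, and since $|t| \geq 1$ for every nonempty tree $t$ this forces $D^\pm_t \in I$ for all $t$. As $\g$ is spanned by $d$ together with the $D^\pm_t$, we conclude $I = \g$, proving simplicity. I expect the main obstacle to be the size/degree bookkeeping in the sub-lemma — verifying that the cross-brackets $[D^-_{t_1}, D^+_{t_2}]$ with equal tree sizes but $t_1 \neq t_2$ vanish, which is exactly where the combinatorial definitions of $\alpha$ and $\beta$ carry the argument — together with pinning down that the generators are linearly independent, so that $\g_0$ is genuinely $\mathbb{C}.d$.
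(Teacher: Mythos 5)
Your proof is correct, and its key step takes a genuinely different route from the paper's. Both arguments begin identically: the Vandermonde trick shows any nonzero ideal $\I$ is graded, so it contains a nonzero homogeneous element. From there the paper runs an iterative descent: given $0 \neq x_n \in \I \cap \g_n$, $n>0$, it picks a tree $\xi$ of maximal size among the branch subtrees at the roots of the trees occurring in $x_n$, asserts (as ``easy to see'') that $[D^-_\xi, x_n]$ is a nonzero element of strictly smaller positive degree, and repeats until it reaches $\g_1 = \langle D^+_\bullet \rangle$, only then producing $d = [D^-_\bullet, D^+_\bullet]$. You replace the entire descent with a single bracket: since $\alpha(t_1,t_2;\cdot)$ and $\beta(t_1,t_2;\cdot)$ count cuts whose pruned component $P_e$ has strictly fewer vertices than the tree it is cut from, relation \ref{rel3} forces $[D^-_{t_1}, D^+_{t_2}] = 0$ whenever $|t_1| = |t_2|$ and $t_1 \neq t_2$, so $[D^-_{t_0}, x] = c_{t_0}\, d$ on the nose. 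This buys you something real: the paper's non-vanishing claim requires ruling out cancellation among the many terms of $[D^-_\xi, x_n]$ (that is the whole content of its ``easy to see''), whereas your sub-lemma is an exact vanishing statement proved by pure vertex counting, and the surviving coefficient $c_{t_0} \neq 0$ is visible by inspection, landing directly in the one-dimensional space $\g_0 = \mathbb{C}.d$. The hypothesis you flag explicitly --- linear independence of the generators, so that $\g_0 = \mathbb{C}.d$ and the coefficients $c_t$ are well defined --- is used just as implicitly by the paper (e.g.\ when it writes $\g_1 = \langle D^+_\bullet \rangle$), and is justified exactly as you say, by exhibiting the action on $\CT$; so your argument is complete, and if anything fills in the one point the paper leaves to the reader.
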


\begin{proof}
Suppose that $\I \subset \g$ is a proper Lie ideal. If $x \in \I$, let $x = \sum_i x_i, \; \;  x_i \in \g_i$ be its decomposition into homogenous components. We have
\[
[d,x] = \sum_{n}  n x_n
\]
which implies that $x_n \in I$ for every $n$ (because the Vandermonde determinant is invertible) i.e. $\I = \oplus_{n \in \mathbb{Z}} (\I \cap \g_n)$. Suppose now that $x_n \in \g_n, \; n > 0 $. We can write $x_n$ as a linear combination of $n$--vertex rooted trees
\begin{equation}
\label{xdecomp}
x_n = \sum_{t \in \T_n} \alpha_t \cdot t
\end{equation}
We proceed to show that $D^+_{\bullet} \in \I$, where $\bullet$ is the rooted tree with one vertex. Let $S(x_n) \subset \T_n$ be the subset of n-vertex trees occurring with a non-zero $\alpha_t$ in \ref{xdecomp}.
Given a rooted tree $t$, let $St(t)$ denote the set of rooted trees obtained by removing all the edges emanating from the root. Let 
$$
St(x_n) = \bigcup_{s \in S(x_n)} St(s)
$$
and let $\xi \in St(x_n)$ be of maximal degree. It is easy to see that $[D^-_\xi, x_n]$ is a non-zero element of $\g_{n-|\xi|}$. Starting with $x_n \in \n_+, \; x_n \neq 0$, and repeating this process if necessary, we eventually obtain a non-zero element of $\g_1 = < D^+_{\bullet} >$. Now, $[D^-_{\bullet}, D^+_{\bullet}] = d$, and since $[d, \g]=\g]$, this implies $\I=\g$.  We have thus shown that if $\I$ is proper, then 
$$\I \cap \n_+ = 0$$
Applying $\iota$ shows that $\I \cap \n_ - 0$ as well, and it is clear that $\I \cap \mathbb{C}.d = 0$. 

\end{proof}

We can now use this result to deduce a couple of facts about the representation theory of $\g$.

\begin{corollary}
If $V$ is a non-trivial representation of $\g$, then $V$ is faithful. 
\end{corollary}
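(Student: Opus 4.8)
The plan is to deduce this directly from the simplicity of $\g$ established in Theorem \ref{gissimple}. A representation of $\g$ on $V$ is the same data as a Lie algebra homomorphism $\rho \colon \g \to \mathfrak{gl}(V)$, and the first step is to observe that its kernel $\ker \rho$ is a Lie ideal of $\g$: if $x \in \ker \rho$ and $y \in \g$, then $\rho([y,x]) = [\rho(y), \rho(x)] = 0$, so $[y,x] \in \ker \rho$.

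Next I would invoke the theorem just proved. Since $\g$ is simple, its only ideals are $0$ and $\g$, so $\ker \rho$ must be one of these two. If $\ker \rho = \g$, then $\rho$ is identically zero, which is precisely the statement that $V$ is the trivial representation; this is excluded by hypothesis. Hence $\ker \rho = 0$, i.e.\ $\rho$ is injective, which is exactly the assertion that $V$ is faithful.

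The argument is essentially formal once simplicity is available, so there is no substantial obstacle to overcome. The only point that requires any care is the bookkeeping around the word \emph{non-trivial}: one must fix the convention that a representation is trivial precisely when every element of $\g$ acts as the zero operator, so that ``non-trivial'' rules out exactly the case $\ker \rho = \g$. With this convention in place, the dichotomy furnished by simplicity yields the claim immediately.
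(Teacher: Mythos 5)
Your proof is correct and is exactly the argument the paper intends: the corollary is stated as an immediate consequence of Theorem \ref{gissimple}, with the kernel of the representation being an ideal that must vanish by simplicity. Nothing further is needed.
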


\begin{corollary}
$\g$ has no non-trivial finite-dimensional representations.
\end{corollary}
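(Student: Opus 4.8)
The plan is to derive this statement immediately from the simplicity of $\g$ established in Theorem \ref{gissimple}, together with the fact that $\g$ is infinite-dimensional. The central observation is that the kernel of any representation is an ideal, so simplicity leaves essentially no room for a faithful image in a finite-dimensional space.

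Concretely, suppose $V$ is a finite-dimensional representation of $\g$, and let $\rho \colon \g \to \mathfrak{gl}(V)$ be the corresponding Lie algebra homomorphism. Since $\ker \rho$ is an ideal of $\g$, and $\g$ is simple by Theorem \ref{gissimple}, we have either $\ker \rho = \g$ or $\ker \rho = 0$. In the first case $\rho$ is identically zero, i.e.\ $V$ is the trivial representation. In the second case $\rho$ is injective, and therefore realizes $\g$ as a Lie subalgebra of $\mathfrak{gl}(V)$; but then $\dim \g \leq (\dim V)^2 < \infty$, contradicting the infinite-dimensionality of $\g$. Hence a non-trivial finite-dimensional representation cannot exist. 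Alternatively one can invoke the preceding corollary directly: any non-trivial representation is faithful, hence embeds $\g$ into $\mathfrak{gl}(V)$, which is impossible once $V$ is finite-dimensional. Either route reduces the claim to the two inputs already in hand.

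I do not anticipate a genuine obstacle, since all the substantive work has been carried out in proving simplicity. The only point meriting a word of care is confirming that $\g$ is indeed infinite-dimensional. This follows because the generators $D^+_t$ are indexed by the infinite set $\T$ of rooted trees and are linearly independent: trees of distinct size $|t|$ contribute to distinct graded pieces $\g_n$, and within a fixed degree the finitely many generators attached to inequivalent trees remain independent. Thus $\g$ cannot embed into any $\mathfrak{gl}(V)$ with $\dim V < \infty$, and the corollary follows.
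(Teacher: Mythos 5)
Your proof is correct and matches the paper's own route: the paper derives this corollary from Theorem \ref{gissimple} together with the infinite-dimensionality of $\g$, exactly as you do, with the preceding corollary (non-trivial $\Rightarrow$ faithful) playing the role of your kernel-is-an-ideal step. The paper additionally sketches an independent argument via the $\mathfrak{sl}_2$-subalgebras $\g^t$ and boundedness of $d$-eigenvalues, but that is offered only as an alternative to the approach you took.
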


The latter can also be easily deduced by analyzing the action of the $\mathfrak{sl}_2$ subalgebras $\g^t$ as follows. Suppose that $V$ is a finite-dimensional representation of $\g$. To show that $V$ is trivial, it suffices to show that it restricts to a trivial representation of $\g^t$ for every $t \in \T$. This in turn, will follow if we can show that for a \emph{single} tree $t \in \T$, $\g^t$ acts trivially, because this implies that $d$ acts trivially, and $\mathbb{C}.d \subset \g^t$ plays the role of the Cartan subalgebra.  Let 
\[
V = \bigoplus_{i=1\cdots k} V_{\delta_i}
\]
be a decomposition of $V$ into $d$--eigenspaces - i.e. if $v \in V_{\delta_i}$, then $d.v_i = \delta_i v$. Since $V$ is finite-dimensional, the set $\{\delta_i \}$ is bounded, and so lies in a disc of radius $R$ in $\mathbb{C}$. If $v \in V_{\delta_i}$ then $[d,D^{+}_t] = |t| D^{+}_t$ implies that $D^{+}_t . v \in V_{\delta_i+|t|}$. Choosing a $t \in \T$ such that $|t| > 2R$ shows that $D^{+}_t . v = 0$ for every $v \in V$.  
  
\subsection{Lowest-weight representations of $\g$}

We begin by examining the "defining" representation $\CT$ of $\g$ introduced in section \ref{basicfacts}. Its decomposition into $d$--eigenspaces is given by \ref{CT}. Given a representation $V$ of $\g$ on which $d$ is diagonalizable, with finite-dimensional eigenspaces, and writing 
\[
V = \bigoplus_{\delta} V_{\delta}
\] 
for this decomposition, we define the emph{character} of $V$, $char(V,q)$ to be the formal series
\[
char(V,q) = \sum_\delta dim(V_\delta) q^\delta
\]
The case $V=\CT$, where $dim(V_n)$ is the number of rooted trees on $n$ vertices, suggests that representations of $\g$ may contain interesting combinatorial information.  The triangular structure  \ref{triangular}  of $\g$ suggests that a theory of highest-- or lowest--weight representations may be appropriate.

\begin{definition} \label{lw}
We say that a representation $V$ of $\g$ is \emph{lowest--weight} if the following properties hold
\begin{enumerate}
\item $V = \oplus V_{\delta}$ is a direct sum of finite-dimensional eigenspaces for $d$.
\item The eigenvalues $\delta$ are bounded in the sense that there exists $L \in \mathbb{R}$ such that $Re(\delta) \geq L$.
\end{enumerate}
\end{definition} 

We call the $\delta$ the \emph{weights} of the representation, and category of such representations $\OO$. If $V \in \OO$,  we say $v \in V_{\delta}$ is a \emph{lowest-weight vector} if $\n_- v = 0$.  Since $D^{-}_t$ decreases the weight of a vector by $|t|$, and the weights all lie in a half-plane, it is clear that every $V \in \OO$ contains a lowest-weight vector. 

\medskip

\noindent Recall that a representation $V$ of $\g$ is \emph{indecomposable} if it cannot be written as $V = V_1 \oplus V_2$ for two non-zero representations. Let $U(\mathfrak{h})$ denote the universal enveloping algebra of a Lie algebra $\mathfrak{h}$.

\begin{lemma}
   If $v \in V_\lambda$ is a lowest-weight vector, then $U(\n_+).v$ is an indecomposable representation of $\g$
\end{lemma}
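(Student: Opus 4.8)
The plan is to show that $W := U(\n_+).v$ is a genuine $\g$-submodule whose lowest weight space is one-dimensional, and then to deduce indecomposability from a standard weight-space argument. First I would verify that $W$ is stable under all of $\g$. Stability under $\n_+$ is automatic, and stability under $d$ follows because the relations \ref{rel5}--\ref{rel6} make $U(\n_+)$ into a $\Z_{\geq 0}$-graded algebra (graded by total number of vertices), so that $d$ acts on the degree-$n$ piece $U(\n_+)_n.v$ by the scalar $\lambda + n$. The one point requiring work is stability under $\n_-$: given a monomial $D^-_t D^+_{s_1}\cdots D^+_{s_k} v$, I would commute $D^-_t$ to the right through the $D^+_{s_i}$, using \ref{rel3} and \ref{rel4} to rewrite each commutator $[D^-_t, D^+_{s_i}]$ as a sum of terms lying in $\n_+$, a multiple of $d$, and terms lying in $\n_-$. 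An induction on $k$ then shows that every resulting monomial either already lies in $U(\n_+).v$, or terminates in some $D^-_{t'}$ or $d$ acting on $v$; since $\n_- v = 0$ and $d v = \lambda v$, all such terms land back inside $W$.

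Second, I would pin down the lowest weight space. Since every rooted tree satisfies $|t| \geq 1$, each generator $D^+_t$ strictly raises the $d$-weight, so $W = \bigoplus_{n \geq 0} U(\n_+)_n.v$ with $U(\n_+)_n.v \subseteq W_{\lambda + n}$. Because $U(\n_+)_0 = \mathbb{C}.1$, this gives $W_\lambda = \mathbb{C}.v$, which is one-dimensional as $v \neq 0$, and $\lambda$ is the unique minimal weight occurring in $W$.

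Finally, suppose toward a contradiction that $W = W_1 \oplus W_2$ with both $W_i$ nonzero $\g$-submodules. Each $W_i$ is $d$-stable and $d$ acts diagonalizably on $W$, so each $W_i$ is itself graded by weight, $W_i = \bigoplus_{\delta} (W_i)_\delta$, and in particular $W_\lambda = (W_1)_\lambda \oplus (W_2)_\lambda$. As $\dim W_\lambda = 1$, the vector $v$ lies entirely in one summand, say $v \in W_1$. But $W_1$ is closed under the action of $\n_+$, whence $U(\n_+).v \subseteq W_1$, forcing $W_1 = W$ and $W_2 = 0$, a contradiction. Therefore $W$ admits no nontrivial decomposition and is indecomposable.

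I expect the main obstacle to be the first step, verifying $\n_-$-stability, since it is the only part that genuinely invokes the mixed commutation relation \ref{rel3}; the combinatorial bookkeeping of pushing $D^-_t$ past a string of $D^+$'s is where care is needed. The argument is nonetheless a routine PBW-style induction, once one observes that each commutation either strictly reduces the number of $D^+$ factors or produces a $d$- or $\n_-$-term that ultimately scales or annihilates $v$.
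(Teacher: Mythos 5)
Your proof is correct and follows essentially the same route as the paper: establish that $U(\n_+).v$ is a full $\g$-submodule and then deduce indecomposability from the one-dimensionality of the weight space $W_\lambda$. The only difference is cosmetic — where you verify $\n_-$-stability by an explicit commutator-pushing induction, the paper obtains $U(\g).v = U(\n_+).v$ in one stroke from the PBW factorization $U(\g) = U(\n_+)\otimes \mathbb{C}[d] \otimes U(\n_-)$ together with $\mathbb{C}[d]\otimes U(\n_-).v = \mathbb{C}.v$.
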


\begin{proof}
   $U(\g).v$ is clearly the smallest sub-representation of $V$ containing $v$. The decomposition \ref{triangular} together with the PBW theorem implies that
   \[
   U(\g) = U(\n_+)\otimes \mathbb{C}[d] \otimes U(\n_-)
   \]
   Because $v$ is a lowest-weight vector, $\mathbb{C}[d] \otimes U(\n_-).v = \mathbb{C}.v$. It follows that $U(\g).v = U(\n_+).v$. That the latter is indecomposable follows from the fact that in  $U(\n_+).v$, the weight space corresponding to $\lambda$ is one-dimensional, and so if $U(\n_+).v=V_1\oplus V_2$, then $v \in V_1$ or $v \in V_2$. 
\end{proof}

Observe that 
$$U(\n_+).v = \oplus (U(\n_+).v)_{\lambda+k}, \; \; k \in \mathbb{Z}_{\geq 0}$$
where $(U(\n_+).v)_{\lambda+k}$ is spanned by monomials of the form
\begin{equation} \label{PBW}
D^{+}_{t_1} D^{+}_{t_2} \cdots D^{+}_{t_i} . v
\end{equation}
with $|t_1|+\cdots |t_i| = k$. 

The category $\mc{O}$ contains Verma-like modules. For $\lambda \in \mathbb{C}$, let $\mathbb{C}_{\lambda}$ denote the one-dimensional representation of $\mathbb{C}.d\oplus \n_{-}$ on which $\n_-$ acts trivially, and $d$ acts by multiplication by $\lambda$.

\begin{definition}
The $\g$--module
\[
W(\lambda) = U(\g) \underset{\mathbb{C}[d] \otimes U(\n_-)}{\otimes} \mathbb{C_\lambda}
\]
will be called \emph{the Verma module} of lowest weight $\lambda$.
\end{definition}

\noindent Choosing an ordering on trees yields a PBW basis for $\n_+$, and thus also a basis of the form \ref{PBW} for $W(\lambda)$.

Given a representation $V \in \mc{O}$, and a lowest weight vector $v \in V_\lambda$, we obtain a map of representations
\begin{equation} \label{univprop}
W(\lambda) \mapsto V
\end{equation}
\[
\mathbf{1} \mapsto v
\]

\begin{lemma}
If $V \in \mc{O}$ is an irreducible representation, then $V$ is the quotient of a Verma module.
\end{lemma}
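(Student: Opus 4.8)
The plan is to exhibit the explicit surjection from a Verma module onto $V$ furnished by the universal property already recorded in the excerpt. Let $V \in \mc{O}$ be irreducible. Since $V$ lies in $\mc{O}$, the remark following Definition \ref{lw} guarantees that $V$ contains a lowest-weight vector $v \in V_\lambda$ for some $\lambda \in \mathbb{C}$, meaning $\n_- v = 0$. The one-dimensional module $\mathbb{C}_\lambda$ of $\mathbb{C}.d \oplus \n_-$ then maps to the line $\mathbb{C}.v \subset V$ by sending the generator $\mathbf{1}$ to $v$: this is a map of $(\mathbb{C}.d \oplus \n_-)$--modules precisely because $d$ acts on $v$ by $\lambda$ and $\n_-$ kills $v$, matching the action on $\mathbb{C}_\lambda$.

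First I would invoke the universal property of the induced module, displayed in \ref{univprop}, to promote this to a $\g$--module homomorphism $\phi \colon W(\lambda) \to V$ with $\phi(\mathbf{1}) = v$. Next I would argue that $\phi$ is surjective: its image $\phi(W(\lambda))$ is a nonzero $\g$--submodule of $V$ (nonzero because it contains $v$), and since $V$ is irreducible the only such submodule is $V$ itself, so $\phi$ is onto. Finally, by the first isomorphism theorem for $\g$--modules, $V \cong W(\lambda)/\ker\phi$, exhibiting $V$ as a quotient of the Verma module $W(\lambda)$, which is exactly the claim.

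The only real point requiring care is the existence of the lowest-weight vector, and this has already been supplied: because $V = \oplus_\delta V_\delta$ decomposes into $d$--eigenspaces with weights confined to a half-plane $\mathrm{Re}(\delta) \geq L$, and each $D^-_t$ lowers the weight by $|t| > 0$, any nonzero homogeneous vector can be pushed down only finitely many times before $\n_-$ annihilates it, producing a lowest-weight vector. I would remark that irreducibility is used exactly once, to force surjectivity of $\phi$; everything else is formal and rests on the PBW decomposition $U(\g) = U(\n_+) \otimes \mathbb{C}[d] \otimes U(\n_-)$ underlying the construction of $W(\lambda)$. I do not anticipate a genuine obstacle here, as the statement is a standard consequence of the induction-restriction adjunction once the lowest-weight vector is in hand; the proof is short and the burden of the lowest-weight category $\mc{O}$ has been front-loaded into the definitions and the preceding lemma.
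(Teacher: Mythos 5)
Your proof is correct and follows essentially the same route as the paper: both arguments extract a lowest-weight vector $v \in V_\lambda$ from the definition of $\mc{O}$, use the universal property \ref{univprop} to obtain a map $W(\lambda) \to V$ sending $\mathbf{1} \mapsto v$, and invoke irreducibility of $V$ to conclude this map is surjective (the paper phrases this as $V = U(\g).v = U(\n_+).v$, which is a quotient of $W(\lambda)$). Your write-up is simply a more detailed spelling-out of the same argument, including the induction-restriction adjunction and the first isomorphism theorem that the paper leaves implicit.
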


\begin{proof}
Since $V \in \mc{O}$, $V$ possesses a lowest-weight vector $v \in V_\lambda$ for some $\lambda \in \mathbb{C}$. Since $V$ is irreducible, $V = U(\g).v=U(\n_+).v$. The latter is a quotient of $W(\lambda)$.  
\end{proof}

\bigskip

\noindent We have
\begin{align*}
Char(W(\lambda)) &= q^{\lambda} \sum_{n \in \mathbb{Z}_{\geq 0}} dim(\CT_n) q^{n} \\
                                  &= q^{\lambda} \prod_{n \in \mathbb{Z}_{\geq 0}} \frac{1}{{(1 - q^n)}^{P(n)}}\\
\end{align*}
where $P(n)$ is the number of primitive elements of degree $n$ in $\mathcal{H}_{K}$.

\bigskip

\subsection{Irreducibility of $W(\lambda)$ }

It is a natural question whether $W(\lambda)$ is irreducible. In this section we prove the following result:

\begin{theorem}
For $\lambda$ outside a countable subset of $\mathbb{C}$ containing $0$, $W(\lambda)$ is irreducible. 
\end{theorem}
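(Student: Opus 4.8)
The plan is to reduce irreducibility to the non-degeneracy of a bilinear form, the analogue of the Shapovalov form from highest-weight theory. First I would define, using the involution $\iota$, a contravariant bilinear form $\langle \cdot, \cdot \rangle_\lambda$ on $W(\lambda)$ by declaring $\langle \mathbf{1}, \mathbf{1} \rangle_\lambda = 1$ and requiring $\langle D^+_t u, w \rangle_\lambda = \langle u, D^-_t w \rangle_\lambda$ for all $t$. Since $\iota$ is a gradation-reversing automorphism exchanging $\n_+$ and $\n_-$, this form is well defined and symmetric, and because weight spaces of differing $d$-eigenvalue are orthogonal, it decomposes as a direct sum of finite-dimensional forms $\langle\cdot,\cdot\rangle_\lambda^{(k)}$ on each weight space $W(\lambda)_{\lambda+k}$, which has the PBW basis of monomials \ref{PBW} with $|t_1|+\cdots+|t_i|=k$. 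The standard argument then shows that the radical of this form is the unique maximal proper submodule, so $W(\lambda)$ is irreducible if and only if $\langle\cdot,\cdot\rangle_\lambda^{(k)}$ is non-degenerate for every $k \geq 0$.

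Next I would compute the Gram matrix of $\langle\cdot,\cdot\rangle_\lambda^{(k)}$ in the PBW basis and examine its dependence on $\lambda$. The key structural input is that moving all $D^-$ operators past the $D^+$ operators to hit $\mathbf{1}$ uses relations \ref{rel3} and \ref{rel4}; each application of \ref{rel4}, $[D^-_t, D^+_t] = d$, produces a factor of $d$, which acts on the relevant vector as a weight eigenvalue of the form $\lambda + (\text{non-negative integer})$. Consequently each matrix entry is a polynomial in $\lambda$ with integer-or-combinatorial coefficients, and hence so is the determinant $\det_k(\lambda) := \det\langle\cdot,\cdot\rangle_\lambda^{(k)}$. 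The crucial point I must establish is that $\det_k(\lambda)$ is not identically zero as a polynomial in $\lambda$. Granting this, each $\det_k$ has finitely many roots, so the union over all $k \in \mathbb{Z}_{\geq 0}$ of the zero sets is a countable subset of $\mathbb{C}$, and outside it every $\langle\cdot,\cdot\rangle_\lambda^{(k)}$ is non-degenerate, giving irreducibility. To see that $0$ lies in the exceptional set, observe that at $\lambda = 0$ the vector $D^+_\bullet \mathbf{1}$ has $\langle D^+_\bullet \mathbf{1}, D^+_\bullet \mathbf{1}\rangle_0 = \langle \mathbf{1}, D^-_\bullet D^+_\bullet \mathbf{1}\rangle_0 = \langle \mathbf{1}, d\,\mathbf{1}\rangle_0 = 0$, forcing a degeneracy already at $k=1$.

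The main obstacle is proving $\det_k(\lambda) \not\equiv 0$. I would approach this by identifying the leading behavior of the determinant in $\lambda$. The diagonal entry $\langle D^+_{t_1}\cdots D^+_{t_i}\mathbf{1}, D^+_{t_1}\cdots D^+_{t_i}\mathbf{1}\rangle_\lambda$ acquires its highest power of $\lambda$ precisely from the "straightening" terms in which each $D^-$ is commuted directly against the matching $D^+$ via \ref{rel4}, contributing the product of the intermediate $d$-eigenvalues, which to top order in $\lambda$ behaves like $\lambda^i$ (one factor per creation operator). The off-diagonal and cross-term contributions from \ref{rel3}, which lower the number of operators, carry strictly lower powers of $\lambda$. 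Thus for $\lambda$ large the Gram matrix is dominated by a diagonal matrix whose entries grow like distinct or controlled powers of $\lambda$; more carefully, I would argue that the top-degree part of $\det_k(\lambda)$ is a nonzero monomial in $\lambda$ by showing the leading term factorizes as a product of the triangular $\mathfrak{sl}_2$-type contributions over the separate creation operators. This amounts to checking that the $\lambda \to \infty$ limit, after suitable rescaling, reduces to the non-degenerate Shapovalov form of a product of Heisenberg/$\mathfrak{sl}_2$ pieces — precisely the structure guaranteed by the subalgebras $\g^t \cong \mathfrak{sl}_2$ and relation \ref{rel4}. Establishing this leading-term nonvanishing rigorously is the technical heart of the argument; everything else is formal.
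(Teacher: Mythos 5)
Your plan is sound in outline but takes a genuinely different route from the paper, and the difference is located exactly at the step you defer. You reduce irreducibility to non-degeneracy of a contravariant Shapovalov-type form and must then show $\det_k(\lambda)\not\equiv 0$ by a leading-term analysis in $\lambda$. The paper instead writes the condition ``$w$ is a lowest-weight vector in $W(\lambda)_{\lambda+n}$'' as a linear system $(A+\lambda B)[\alpha_I]=0$ whose degeneracy locus $S_{n,1}$ is cut out by minors (polynomials in $\lambda$), and then disposes of the ``not identically zero'' issue not by asymptotics but by exhibiting a single good value: its key Lemma \ref{Z1} shows $W(1)$ is irreducible, by identifying $W(1)$ with the quotient $\M=\CT/\mathbb{C}.\mathbf{1}$ of the defining representation (adding a root to a forest on $n-1$ vertices is a bijection onto trees on $n$ vertices) and then reusing the pruning argument from the simplicity proof of Theorem \ref{gissimple} (act by $D^-_{\xi}$ for $\xi$ of maximal degree among root-branches). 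One non-degenerate specialization forces each $S_{n,1}$ to be a proper, hence finite, subset, and the exceptional set is the countable union of these. So the paper's evaluation at $\lambda=1$ does precisely the job your leading-term analysis is meant to do, with the concrete tree model substituting for the $\lambda\to\infty$ limit; your approach, if completed, buys something the paper's does not (an explicit determinant whose zero locus contains all exceptional $\lambda$, and a method that works absent a concrete model), at the cost of more serious bookkeeping.

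Two points need attention before your argument is complete. First, the form: the paper's $\iota$ is an automorphism with $\iota(d)=-d$, not an anti-involution fixing $d$; to get contravariance $\langle D^+_t u, w\rangle = \langle u, D^-_t w\rangle$ you must check that $D^+_t\mapsto D^-_t$, $D^-_t\mapsto D^+_t$, $d\mapsto d$ extends to an anti-automorphism of $\g$. It does, but this uses the symmetry $\alpha(t_2,t_1;t)=\beta(t_1,t_2;t)$ of the structure constants in \ref{rel3}, which should be verified from their definitions. Second, and more seriously, the claim that off-diagonal entries have strictly smaller degree in $\lambda$ is asserted, not proved, and it is the entire content of the theorem in your setup. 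It can be closed by a counting argument: in any complete contraction contributing to $\langle D^+_I v, D^+_J v\rangle$, the power of $\lambda$ equals the number $a$ of applications of \ref{rel4}; if $b$ applications of \ref{rel3} occur, then counting operators consumed and produced (each \ref{rel4} consumes a matched pair, each \ref{rel3} consumes a pair and produces one operator, and any $D^-$ reaching $v$ or any surviving $D^+$ kills the term) gives $\# I + \# J + b = 2(a+b)$, i.e.\ $a = (\# I + \# J - b)/2$. Hence the $\lambda$-degree is at most $(\# I+\# J)/2$, with equality only when $b=0$, which forces $I=J$ as multisets; diagonal entries then have degree exactly $\# I$ with positive integer leading coefficient, the identity permutation dominates the determinant expansion, and $\det_k(\lambda)\not\equiv 0$. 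Until you supply this (or an equivalent) argument, your proof has a gap at exactly the point where the paper's Lemma \ref{Z1} does the work.
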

\begin{proof}
Let $v \neq 0$ be a basis for $W(\lambda)_\lambda$. 
$W(\lambda)$ contains a proper sub-representation if and only if contains a lowest-weight vector $w$ such that $w \notin \mathbb{C}.v$. In $W(0)$, $D^+_{\bullet} .v \in W(0)_1$ is a lowest-weight vector, since
\[
D^-_{\bullet} D^{+}_{\bullet}.v = D^{+}_{\bullet} D^-_{\bullet}.v + d.v = 0
\]
and $D^{-}_t.v =0$ for all $t \in \T$ with $|t| \geq 2$ by degree considerations. It follows that $W(0)$ is not irreducible. 

If $I=(t_1, \cdots, t_k)$ is a $k$--tuple of trees such that 
$$t_1 \preceq t_2 \preceq \cdots \preceq t_k $$ in the chosen order, let 
$D^+_I.v$ denote the vector
\begin{equation} \label{vv}
D^+_{t_k} \cdots D^+_{t_1}.v \; \in W(\lambda)
\end{equation}
$w \in W(\lambda)_{\lambda+n}$ is a lowest-weight vector if and only if 
\begin{equation} \label{lw}
D^-_t.w=0
\end{equation}
for all $t$ such that $|t| \leq n$. Writing $w$ in the basis \ref{vv}
\[
w = \sum_{|I|=n} \alpha_I D^+_{I}.v
\]
the conditions \ref{lw} translate into a system of equations for the coefficients $\alpha_I$. For example, if $w \in W(\lambda)_{\lambda+2}$, then 
\[ \psset{levelsep=0.3cm, treesep=0.3cm}
w=\alpha_1 D^+_{ \pstree{\Tr{\bullet}}{\Tr{\bullet}}}.v + \alpha_2 D^{+}_{\bullet} D^{+}_{\bullet}.v
\]
and conditions  $D^-_{\pstree{\Tr{\bullet}}{\Tr{\bullet}}}.v =0 $, $D^-_{\bullet}.w=0$ translate into
\begin{align*}
\lambda \alpha_1 + \lambda \alpha_2 &= 0 \\
\alpha_1 + (2\lambda + 1) \alpha_2 &=0
\end{align*}
The determinant of the corresponding matrix is $2 \lambda^2$, and so for $\lambda \neq 0$, there is no lowest-weight vector $w \in W(\lambda)_{\lambda+2}$. For a general $n$, the system can be written in the form
\[
(A + \lambda B) [\alpha_I] = 0
\]
where $A$ and $B$ are matrices whose entries are non-negative integers. Let
\[
f_n (\lambda) = dim(Ker(A+\lambda B))
\]
Then for every $r \in \mathbb{N}$
\[
S_{n,r} = \{ \lambda \in \mathbb{C} | f_n(\lambda) \geq r \}
\]
if proper, is a finite subset of $\mathbb{C}$, since the condition is equivalent to the vanishing a finite collection of sub-determinants, each of which is a polynomial in $\lambda$. The set of $\lambda \in \mathbb{C}$ for which $W(\lambda)$ is irreducible is therefore
\[
\bigcup_{n \in \mathbb{N}} \{ \mathbb{C} \backslash S_{n,1} \}
\]
The theorem will follow if $S_{n,1}$ is proper for each $n \in \mathbb{N}$.
This follows from the following Lemma.

\end{proof}

\begin{lemma} \label{Z1}
$Z(1)$ is irreducible.
\end{lemma}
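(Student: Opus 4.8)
Interpreting $Z(1)$ as the Verma module $W(1)$ --- equivalently, as the subrepresentation $M=\bigoplus_{n\geq 1}\CT_n$ of the defining representation --- the plan is to realize $W(1)$ concretely inside $\CT$ and to read off irreducibility from a positivity property of the tree basis. Since $D^-_t$ only prunes proper branches while $D^+_t$ only adds vertices, $M$ is a $\g$-subrepresentation of $\CT$, and the one-vertex tree $\bullet$ satisfies $d.\bullet=\bullet$ and $D^-_t.\bullet=0$ for every $t\in\T$ (as $\bullet$ has no edges). Thus $\bullet$ is a lowest-weight vector of weight $1$, and the universal property \ref{univprop} furnishes a map of representations $\phi\colon W(1)\to M$ sending the canonical generator $v$ to $\bullet$. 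I aim to show $\phi$ is an isomorphism onto an irreducible module.

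First I would match characters. A PBW basis of $W(1)$ consists of the monomials $D^+_{t_1}\cdots D^+_{t_i}.v$ with $|t_1|+\cdots+|t_i|=n$, which are indexed by the rooted forests on $n$ vertices, so $\dim W(1)_{1+n}$ equals the number of such forests. Adjoining a single common root to a forest on $n$ vertices is a bijection onto the rooted trees on $n+1$ vertices, whence $\dim W(1)_{1+n}=\dim\CT_{n+1}=\dim M_{1+n}$. Therefore $\phi$ is a graded map between spaces of equal finite dimension in each degree, and it is an isomorphism as soon as it is surjective.

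Surjectivity of $\phi$ is the assertion $M=U(\n_+).\bullet$, i.e. that every rooted tree is produced from $\bullet$ by iterated insertions. I would argue by induction on $|t|$. Given $t$ with $|t|=m\geq 2$, choose a principal branch $b$ (a subtree rooted at a child of $r(t)$) of maximal size and set $t'=t\setminus b$, so that $|t'|<m$ and $t'\in U(\n_+).\bullet$ by induction. Then $D^+_b.t'=\sum_{u\in V(t')}t'\cup_u b$ contains $t$ (graft $b$ back at $r(t)$) together with trees in which $b$ is attached at other vertices; with respect to an order refining the size of the largest principal branch, $t$ is the extremal term, so a secondary induction over this order expresses $t$ through $D^+_b.t'$ and strictly smaller size-$m$ trees already known to lie in $U(\n_+).\bullet$. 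Hence $t\in U(\n_+).\bullet$; $\phi$ is surjective, and by the dimension count it is an isomorphism, so $W(1)\cong M$.

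Finally I would deduce irreducibility from positivity. Declaring the rooted trees an orthonormal basis defines a symmetric bilinear form on $M$ for which $D^-_t$ is adjoint to $D^+_t$ and $d$ is self-adjoint, because attaching $t$ to $s$ so as to obtain $s'$ is the exact reverse of pruning a branch $t$ off $s'$ to recover $s$; the assignment $D^\pm_t\mapsto D^\mp_t$, $d\mapsto d$ extends to an antiautomorphism $\sigma$ of $\g$, so the form is contravariant, $\langle X.u,w\rangle=\langle u,\sigma(X).w\rangle$. This form is positive definite, hence nondegenerate. For the cyclic lowest-weight module $M=U(\n_+).\bullet$ the radical of a $\sigma$-contravariant form is the unique maximal proper subrepresentation: a proper subrepresentation cannot contain $\bullet$, so it meets the one-dimensional weight space $\CC.\bullet$ trivially, and contravariance then places it inside the radical. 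Nondegeneracy forces the radical to vanish, so $M$, and with it $W(1)$, is irreducible; consequently $1\notin S_{n,1}$ for every $n$, each $S_{n,1}$ is proper, and the theorem follows. The main obstacle is the cyclicity step --- controlling the non-leading grafting terms of $D^+_b.t'$ and organizing the triangular induction so that $\bullet$ generates every tree --- whereas the contravariance and positivity of the form are immediate from the combinatorial definitions of $D^{\pm}_t$.
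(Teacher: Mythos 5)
Your identification of $Z(1)$ with $W(1)$, the universal map $\phi\colon W(1)\to M$, and the dimension count via the bijection between forests on $n$ vertices and rooted trees on $n+1$ vertices all run parallel to the paper. Your inductive proof of cyclicity, $M=U(\n_+).\bullet$ (downward induction on the size of the largest principal branch), is correct and is in fact a genuine improvement on the paper, which instead deduces surjectivity of $\phi$ from the irreducibility of $M$ and whose step ``no lowest-weight vectors in degree $>1$ implies $M$ irreducible'' tacitly uses exactly the cyclicity you prove. (A side discrepancy: you treat $M=\bigoplus_{n\geq 1}\CT_n$ as a subrepresentation, while the paper asserts the sequence $0\to\mathbb{C}\to\CT\to M\to 0$ is non-split and works with the quotient; with the action as defined in the paper, pruning always leaves the root, so your reading is defensible, and in any case this does not affect either argument.)

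The last step, however, contains a genuine error on which your whole irreducibility claim rests: the form in which the rooted trees are orthonormal is \emph{not} contravariant, i.e.\ $D^+_t$ and $D^-_t$ are not mutually adjoint for it. Concretely, let $u$ be the path on $3$ vertices and let $w$ be the $4$-vertex tree whose root has a single child carrying two leaves. The coefficient of $w$ in $D^+_{\bullet}u$ is $1$, whereas $D^-_{\bullet}w=2u$ because both leaf edges of $w$ prune to $u$; hence
\[
\langle D^+_{\bullet}u,\,w\rangle \;=\; 1 \;\neq\; 2 \;=\; \langle u,\,D^-_{\bullet}w\rangle .
\]
The asymmetry is intrinsic: insertion counts attachment \emph{sites}, while elimination counts isomorphic \emph{occurrences} of the pruned branch, and symmetric placements create multiplicity only on the elimination side. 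Rescaling cannot repair this: if $\langle t,t\rangle=c_t$, then pairing $D^+_t\bullet$ against the single-branch tree $B(t)$ (root with one branch $t$) forces $c_{B(t)}=c_{\bullet}$ for every $t$, so $c_u=c_w=c_{\bullet}$, while the example above forces $c_w=2c_u$; thus no form making the trees orthogonal can be contravariant. Your $\sigma$ \emph{is} a genuine antiautomorphism, so a contravariant (Shapovalov-type) form on $M$ does exist, but its nondegeneracy is precisely equivalent to the irreducibility being proved, so invoking it gains nothing without a computation. To close the gap, replace your final paragraph by the paper's argument: for $w=\sum\alpha_i t_i\in M_n$ with $n>1$, choose $\xi$ of maximal degree among the root-branches $St(t_i)$ of the trees occurring in $w$; then $D^-_{\xi}.w\neq 0$, so $M$ has no lowest-weight vectors above degree $1$. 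Combined with your cyclicity statement this finishes the proof: a nonzero subrepresentation of $M$ lies in $\OO$, hence contains a lowest-weight vector, hence contains $\bullet$, hence equals $U(\g).\bullet=M$.
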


\begin{proof}
We begin by examining the representation $\CT$. The degree $0$ subspace $\mathbb{C}.1$ is a trivial representation of $\g$. Let $\M$ denote the quotient $\CT/\mathbb{C}.1$. It is easily seen that the exact sequence
\[
0 \mapsto \mathbb{C} \mapsto \CT \mapsto \M \mapsto 0
\]
is non-split. $\M$ has highest weight $1$, and the subspace $\M_1$ can be identified with the span of the tree on one vertex $\bullet$. By the universal property of Verma modules, \ref{univprop} we have a map 
\begin{equation} \label{surject}
W(1) \mapsto M
\end{equation}
sending the lowest-weight vector of $W(1)$ to $\bullet$. Now, $W(1)_n$ is spanned by all vectors \ref{vv} such that $|t_1|+ \cdots |t_k| = n-1$, and so can be identified with the set of forests on $n-1$ vertices, while $M_n$ can be identified with $\CT_n$. The operation of adding a root to a forest on $n-1$ vertices to produce a rooted tree with $n$ vertices yields an isomorphism $W(1)_{n} \cong M_n$. Thus, if the map \ref{surject} is a surjection, it is an isomorphism. This in turn, follows from the fact that $M$ is irreducible. 

It suffices to show that $M_n$ contains no lowest-weight vectors for $n > 1$.  This follows from an argument similar to the one used to prove \ref{gissimple}. Let $w \in M_n$, and write
\[
w = \alpha_1 t_1 + \cdots \alpha_k t_k
\]
where $|t_i|=n$ and we may assume that $\alpha_i \neq 0$. In the notation of \ref{gissimple}, let $\xi \in St(w)$ be of maximal degree. Then
\[
D^-_{\xi}. w \neq 0
\]
Thus, $\M$ is irreducible, and hence isomorphic to $W(1)$ by the map \ref{surject}.
\end{proof}

\end{document}